\providecommand{\U}[1]{\protect \rule{.1in}{.1in}}
\newtheorem{theorem}{Theorem}
\newtheorem{corollary}[theorem]{Corollary}
\newtheorem{proposition}[theorem]{Proposition}
\newenvironment{proof}[1][Proof]{\noindent \textbf{#1.} }{\  \rule{0.5em}{0.5em}}
\begin{document}

\title{Characterizations of special skew ruled surfaces \\by the normal
curvature of some \\distinguished families of curves}
\author{Stylianos Stamatakis\strut \medskip\\ \emph{Aristotle University of Thessaloniki}\\ \emph{Department of Mathematics}\\ \emph{GR-54124 Thessaloniki, Greece}\\ \emph{e-mail: stamata@math.auth.gr}}
\date{}
\maketitle

\begin{abstract}
\noindent We consider skew ruled surfaces in the three-dimensional Euclidean
space and some geometrically distinguished families of curves on them whose
normal curvature has a concrete form. The aim of this paper is to find and
classify all ruled surfaces with the mentioned property\smallskip
.\vspace{0.1cm}\newline MSC 2000: 53A25, 53A05\newline Keywords: Ruled
surfaces, normal curvature, Edlinger-surfaces, right helicoid

\end{abstract}

\section{Introduction}

Geometrically distinguished families of curves on a skew ruled surface in the
Euclidean space $%
\mathbb{R}
^{3}$ have been studied by a number of authors and in many points of view. A
range of results appears when one requires that the curves of the considered
family possess an additional property. The present paper contributes to this
field of themes. We consider special families of curves on a skew ruled
surface and suppose that the normal curvature along these curves has a
concrete form. Our aim is to find the type of all ruled surfaces with the
mentioned property and to classify them. The results are assembled in the
table at the end of the paper.\smallskip

To set the stage for this work the classical notation of ruled surface theory
is briefly presented; for this purpose \cite{Hoschek} is used as a general
reference. In the Euclidean space $%
\mathbb{R}
^{3}$ let $\Phi$ be a regular ruled surface without torsal rulings determined
on $G:=I\times%
\mathbb{R}
$ ($I\subset%
\mathbb{R}
$ open interval) and of the class $C^{3}$. $\Phi$ can be expressed in terms of
the striction line $\boldsymbol{s}=\boldsymbol{s}(u)$ and the unit vector
field $\boldsymbol{e}(u)$ pointing along the rulings as%
\begin{equation}
\boldsymbol{x}(u,v)=\boldsymbol{s}(u)+v\, \boldsymbol{e}(u),\;u\in I,\;v\in%
\mathbb{R}
.\label{1}%
\end{equation}
Moreover we can choose the parameter $u$ to be the arclength along the
spherical curve $\boldsymbol{e}(u)$. Putting $f\,%
\acute{}%
\,(u)=\frac{df}{du}$ for a differentiable function $f(u)$ we have
\begin{equation}
\langle \boldsymbol{s}\,%
\acute{}%
\,(u),\, \boldsymbol{e}\,%
\acute{}%
\,(u)\rangle=0,\quad \left \vert \boldsymbol{e}\,%
\acute{}%
\,(u)\right \vert =1\quad \forall \;u\in I,\label{2}%
\end{equation}
where $\langle$ , $\rangle$ denotes the standard inner product in $%
\mathbb{R}
^{3}$. The \emph{conical curvature }$k(u)$, the \emph{parameter of
distribution }$\delta(u)$ and the \emph{striction }$\sigma(u)$ of the surface
$\Phi$ are given by
\[
k(u)=(\boldsymbol{e}(u),\boldsymbol{e}\,%
\acute{}%
\,(u),\boldsymbol{e}\,%
\acute{}%
\, \,%
\acute{}%
\,(u)),\  \delta(u)=(\boldsymbol{e}(u),\boldsymbol{e}\,%
\acute{}%
\,(u),\boldsymbol{s}\,%
\acute{}%
\,(u)),\  \sigma(u):=\sphericalangle(\boldsymbol{e}(u),\boldsymbol{s}\,%
\acute{}%
\,(u)),
\]
where
\[
-\frac{\pi}{2}<\sigma \leq \frac{\pi}{2}\quad \text{and\quad}\operatorname*{sign}%
\sigma=\operatorname*{sign}\delta.
\]
The functions $k(u)$, $\delta(u)$ and $\sigma(u)$ consist a complete system of
invariants of the surface $\Phi$ (\cite{Hoschek}; p.19).\medskip \newline The
components $g_{ij}$ and $h_{ij}$ of the first and the second fundamental
tensors in (local) coordinates $u^{1}:=u$, $u^{2}:=v$ are the following%
\begin{equation}
\left \{
\begin{array}
[c]{c}%
{\normalsize (g}_{ij}{\normalsize )=}\left(
\begin{array}
[c]{cc}%
v^{2}+\delta^{2}\left(  \lambda^{2}+1\right)   & \delta \, \lambda \\
\delta \, \lambda & 1
\end{array}
\right)  \\
{\small (h}_{ij}{\small )=}\frac{1}{w}\left(
\begin{array}
[c]{cc}%
-\left[  k\,v^{2}{\small +}\delta \,%
\acute{}%
v{\small +}\delta^{2}\left(  k-\lambda \right)  \right]   & \delta \\
\delta & 0
\end{array}
\right)
\end{array}
\right.  ,\label{3}%
\end{equation}
where $w:=\sqrt{v^{2}+\delta^{2}}$ and $\lambda:=\cot \sigma$. The Gaussian
curvature $K$ and the mean curvature $H$ of $\Phi$ are given respectively by%
\begin{equation}
K=\frac{-\delta^{2}}{w^{4}},\quad H=-\frac{k\,v^{2}+\delta \,%
\acute{}%
\,v+\delta^{2}\left(  k+\lambda \right)  }{2\,w^{3}}.\label{4}%
\end{equation}

Skew ruled surfaces $\Phi$, whose osculating quadrics are rotational
hyperboloids, are called \emph{Edlinger-surfaces} \cite{Edlinger},
\cite{Hoschek}. Necessary and sufficient conditions for a ruled surface $\Phi$
to be an Edlinger-surface are the following (\cite{Brauner}; p.103):%
\[
\delta \,%
\acute{}%
\,=k\, \lambda+1=0.
\]
This is a ruled surface of constant parameter of distribution whose striction
line $\boldsymbol{s}(u)$ is a line of curvature. The curves of \emph{constant
striction distance}, i.e. the curves $v=constant$, are in this case lines of
curvature of $\Phi$. The other family of the lines of curvature is determined
by%
\[
\lbrack k^{2}\,v^{2}+\delta^{2}(k^{2}+1)]\,du-\delta \,k\,dv=0.
\]
It is easily verified that the corresponding normal curvatures of the lines of
curvature (principal curvatures) are the following%
\begin{equation}
k_{1}=-k(u)\,w^{-1},\quad k_{2}=\frac{\delta^{2}(u)}{k(u)}w^{-3}. \label{5}%
\end{equation}

In the rest of this paper \emph{only skew (non-developable) ruled surfaces of
the space}\textit{ }$%
\mathbb{R}
^{3}$ are considered \emph{with the parametrization} (\ref{1})
\emph{satisfying the conditions} (\ref{2})\textit{.}

\section{The case of the principal curvatures}

Starting point of this paragraph are the relations (\ref{5}). Firstly the
problem \emph{of finding all ruled surfaces whose a principal curvature has
the following form} is considered%
\begin{equation}
k_{i}=f(u)\,w^{n},\;n\in%
\mathbb{Z}
,\;f(u)\in C^{0}(I),\;i=1\; \text{or}\;2. \label{6}%
\end{equation}
It is obvious that $f(u)\neq0\;$for all $u\in I$ since $\Phi$ is
non-developable.\smallskip

\noindent Using (\ref{3}) the normal curvature in direction $du:dv$ is found
to be%
\begin{equation}
k_{N}=\frac{1}{w}\cdot \frac{-\left[  kv^{2}+\delta \,%
\acute{}%
\,v+\delta^{2}\left(  k-\lambda \right)  \right]  \,du^{2}+2\, \delta
\,du\,dv}{\left[  v^{2}+\delta^{2}\left(  \lambda^{2}+1\right)  \right]
\,du^{2}+2\, \delta \, \lambda \,du\,dv+dv^{2}}\text{.} \label{7}%
\end{equation}
Taking into account (\ref{6}) it follows that%
\begin{align*}
&  [f\,w^{n+1\,}[v^{2}+\delta^{2}(\lambda^{2}+1)]+k\,v^{2}+\delta \,%
\acute{}%
\,v+\delta^{2}(k-\lambda)]\,du^{2}\\
&  +2\, \delta \,(f\, \lambda \,w^{n+1}-1)\,du\,dv+f\,w^{n+1\,}dv^{2}=0.
\end{align*}
This second order in $du:dv$ equation has \emph{exactly one solution }if and
only if its discriminant vanishes:%
\begin{equation}
f^{2}\,w^{2n+4}+f\,[k\,v^{2}+\delta \,%
\acute{}%
\,v+\delta^{2}(k+\lambda)]\,w^{n+1}-\delta^{2}=0\quad \forall \;u\in I,\;v\in%
\mathbb{R}
. \label{8}%
\end{equation}
Let first be $n=0$. Then (\ref{8}) changes into%
\[
f^{4}(v^{2}+\delta^{2})^{4}-2\,f^{2}\delta^{2}(v^{2}+\delta^{2})^{2}%
+\delta^{4}-f^{2\,}[k\,v^{2}+\delta \,%
\acute{}%
\,v+\delta^{2}(k+\lambda)]^{2}(v^{2}+\delta^{2})=0.
\]
In the left hand stays a polynomial of degree eight in $v$ which vanishes for
all $u\in I$ and infinite $v\in%
\mathbb{R}
$. Comparing its coefficients with those of the zero polynomial it becomes
obvious that $f$ vanishes, which it was previously excluded.\smallskip \newline
We distinguish now the following cases:\smallskip \newline \emph{Case I}: Let
$n\in%
\mathbb{Z}
$ in (\ref{8}) be odd. Then we have%
\begin{align*}
Q(v):  &  =f\,[k\,v^{2}+\delta \,%
\acute{}%
\,v+\delta^{2}(k+\lambda)](v^{2}+\delta^{2})^{\frac{n+1}{2}}\\
&  +f^{2}(v^{2}+\delta^{2})^{n+2}-\delta^{2}=0\quad \forall \;u\in I,\;v\in%
\mathbb{R}
.
\end{align*}
For $n\geq1$ the vanishing of the coefficient of $v^{2\left(  n+2\right)  }$,
which is the greatest power of $v$ of the polynomial $Q(v),$ implies $f=0$,
which is impossible.\smallskip \newline Let $n=-1$. Then
\[
Q(v)=f^{2}(v^{2}+\delta^{2})+f\,[k\,v^{2}+\delta \,%
\acute{}%
\,v+\delta^{2}(k+\lambda)]-\delta^{2}=0.
\]
The vanishing of the coefficients of the polynomial $Q(v)$ gives%
\[
f=-k,\quad \delta \,%
\acute{}%
\,=k\, \lambda+1=0,
\]
therefore $\Phi$ is an Edlinger-surface.\smallskip \newline

\noindent Let $n=-3$ and $k_{1}$ the principal curvature having the form
(\ref{6}), i.e. $k_{1}=f(u)\,w^{-3}$. Then from (\ref{4}) for the other
principal curvature the following expression is obtained%

\[
k_{2}=f^{\ast}(u)\,w^{-1}\text{\quad with\quad}f^{\ast}(u):=\frac{-\delta
^{2}(u)}{f(u)},
\]
so that a principal curvature of $\Phi$ has the form (\ref{6}), where $n=-1$.
As it was previously established $\Phi$ is an Edlinger-surface.\smallskip
\newline The case $n\leq-5$ leads to a contradiction as one can easily
confirm.\medskip \newline \emph{Case II}: Let $n\in%
\mathbb{Z}
$ in (\ref{8}) be even. For $n=-2$ it follows from (\ref{8})%
\[
Q(v):=f^{2}[k\,v^{2}+\delta \,%
\acute{}%
\,v+\delta^{2}(k+\lambda)]^{2}-(f^{2}-\delta^{2})^{2}(v^{2}+\delta^{2}%
)=0\quad \forall \;u\in I,\;v\in%
\mathbb{R}
.
\]
The vanishing of the coefficient $f^{2}k^{2}$ of $v^{4}$ implies $k=0$. From
the vanishing of the remaining coefficients of the polynomial $Q(v)$ the
outcome is%
\[
f^{2}\, \delta \,%
\acute{}%
\,^{2}-(f^{2}-\delta^{2})^{2}=2f^{2}\, \delta^{2}\, \delta \,%
\acute{}%
\, \lambda=f^{2\,}\delta^{4\,}\lambda^{2}-\delta^{2}(f^{2}-\delta^{2})^{2}=0.
\]
We finally obtain $\delta \,%
\acute{}%
=\lambda=0$ and $f=\pm \, \delta$. Thus the surface $\Phi$ is a right
helicoid.\smallskip \newline The cases $n\geq2$ and $n\leq4$ lead to
contradictions, as one can easily confirm.\smallskip \newline The results are
formulated as follows:

\begin{proposition}
\textit{Let }$\Phi \subset%
\mathbb{R}
^{3}$\textit{ be a skew ruled }$C^{3}$\textit{-surface whose a principal
curvature has the form \emph{(\ref{6})}. Then one of the following
occurs:}\emph{\newline(a)}$\quad n=-1$\textit{, }$f(u)=-k(u)$\textit{ and
}$\Phi$\textit{ is an Edlinger-surface.\newline}\emph{(b)}$\quad
n=-2$\textit{, }$f(u)=\pm \, \delta(u)$\textit{ and }$\Phi$\textit{ is a right
helicoid.\newline}\emph{(c)}$\quad n=-3$\textit{, }$f(u)=\delta^{2}%
(u)\,k^{-1}(u)$\textit{ and }$\Phi$\textit{ is an Edlinger-surface.}
\end{proposition}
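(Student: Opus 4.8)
The plan is to exploit the fact that a principal curvature is precisely an extremal value of the normal curvature, so that the direction realizing it is unique. Concretely, I would impose $k_N=f\,w^{n}$ in (\ref{7}), clear the (positive) denominator, and read off a quadratic form in $du:dv$; a single (double) root is forced exactly when its discriminant vanishes, which is relation (\ref{8}). The whole argument then reduces to analysing (\ref{8}) as an identity that must hold for \emph{every} $v\in\mathbb{R}$ at each fixed $u$, i.e.\ to matching polynomial coefficients in $v$.

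The next step is to separate according to the parity of $n$, because the factor $w^{\,n+1}=(v^{2}+\delta^{2})^{(n+1)/2}$ is a genuine polynomial in $v$ when $n$ is odd but carries a square root when $n$ is even. For $n$ odd one works with (\ref{8}) directly; for $n$ even one first isolates the $w$-term and squares to rationalise, as is done for $n=0$ and $n=-2$. In either regime I would begin by inspecting the top-degree coefficient: for $|n|$ large the leading power of $v$ in (\ref{8}) cannot be cancelled unless $f\equiv 0$, which is excluded since $\Phi$ is non-developable. This immediately confines $n$ to a short list and disposes of the cases $n\geq 1$ (odd), $n\leq -5$ (odd), and $n\geq 2$, $n=0$, $n\leq -4$ (even) by the impossibility $f=0$.

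It then remains to treat the finitely many surviving exponents. For $n=-1$ the coefficient comparison in $Q(v)=f^{2}(v^{2}+\delta^{2})+f\,[k\,v^{2}+\delta'\,v+\delta^{2}(k+\lambda)]-\delta^{2}$ yields simultaneously $f=-k$, $\delta'=0$ and $k\lambda+1=0$, which are exactly the Edlinger conditions, giving (a). For $n=-2$ the vanishing of the coefficient of $v^{4}$ forces $k=0$, and the remaining relations collapse to $\delta'=\lambda=0$ with $f=\pm\delta$, characterising the right helicoid, giving (b). The slickest point, and the one I would single out as the crux, is $n=-3$: rather than expand an eighth-degree polynomial again, I would invoke $K=k_{1}k_{2}$ together with $K=-\delta^{2}w^{-4}$ from (\ref{4}) to conclude that if $k_{1}=f\,w^{-3}$ then the companion curvature is $k_{2}=(-\delta^{2}/f)\,w^{-1}$, i.e.\ again of the form (\ref{6}) with $n=-1$; the already-settled case then returns the Edlinger conclusion (c).

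The main obstacle is neither the geometry nor the final identifications but the coefficient bookkeeping in the large-$|n|$ regime: one must be sure that the leading term of (\ref{8}), after the parity-dependent rationalisation, really is unmatched, so that $f=0$ is genuinely forced and no exponent outside $\{-1,-2,-3\}$ survives. Once that pruning is secured, the three surviving cases are routine, and the $n=-3$ reduction via the Gaussian curvature neatly avoids a second lengthy polynomial comparison.
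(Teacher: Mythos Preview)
Your proposal is correct and follows essentially the same route as the paper: derive the discriminant identity (\ref{8}), split by the parity of $n$, use leading-coefficient arguments to rule out all exponents outside $\{-1,-2,-3\}$, and then settle those three by direct coefficient comparison. In particular, your ``slick'' reduction of the $n=-3$ case via $K=k_{1}k_{2}=-\delta^{2}w^{-4}$ is exactly the argument the paper uses, so there is no methodological difference to report.
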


\noindent From this proposition follows the next

\begin{corollary}
\textit{Let }$\Phi \subset%
\mathbb{R}
^{3}$\textit{ be a skew ruled }$C^{3}$\textit{-surface, whose principal
curvatures satisfy the relation}\emph{ }%
\begin{equation}
\delta^{2\,}k%
\genfrac{}{}{0pt}{1}{3}{1}%
+k^{4}\,k_{2}=0.\label{9}%
\end{equation}
\textit{Then }$\Phi \ $\textit{is an Edlinger-surface}.
\end{corollary}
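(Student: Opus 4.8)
The plan is to reduce the hypothesis to the setting already settled by the Proposition, namely to exhibit one principal curvature of $\Phi$ in the special form (\ref{6}). The tool for this is the elementary fact that the product of the two principal curvatures equals the Gaussian curvature, $k_{1}k_{2}=K$, which by (\ref{4}) equals $-\delta^{2}w^{-4}$.

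First I would exploit that $\Phi$ is skew, so $\delta(u)\neq0$ for every $u\in I$; hence $K=-\delta^{2}w^{-4}<0$ never vanishes, whence neither principal curvature is zero, and (since $K<0$) there are no umbilics, so $k_{1}$ and $k_{2}$ are globally well-defined and continuous on $G$. Solving $k_{2}=K\,k_{1}^{-1}=-\delta^{2}w^{-4}k_{1}^{-1}$ and inserting this into (\ref{9}) gives, after cancelling the nonzero factor $\delta^{2}$ and multiplying by $k_{1}$, the pure power relation $k_{1}^{4}=k^{4}\,w^{-4}$. A by-product is that $k(u)$ cannot vanish, for $k(u_{0})=0$ would force $k_{1}=0$ along the ruling $u=u_{0}$, contradicting $k_{1}k_{2}=K\neq0$.

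Next, since $w>0$, this relation yields $k_{1}^{2}=(k\,w^{-1})^{2}$ and hence $k_{1}=\pm\,k\,w^{-1}$ pointwise. The ratio $k_{1}/(k\,w^{-1})$ is a continuous $\{\pm1\}$-valued function on the connected domain $G$, so its sign is constant; therefore $k_{1}=f(u)\,w^{-1}$ with $f(u):=\pm\,k(u)\in C^{0}(I)$ for a fixed sign. This is exactly the form (\ref{6}) with $n=-1$. Invoking the Proposition, a principal curvature of $\Phi$ has the form (\ref{6}) with $n=-1$, which among its three alternatives is precisely case (a); consequently $\Phi$ is an Edlinger-surface (and the Proposition moreover pins the sign down to $f=-k$).

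I do not expect a genuine obstacle: the whole content is the algebraic elimination of $k_{2}$ through $k_{1}k_{2}=K$, which converts (\ref{9}) into a single power condition on $k_{1}$. The only point deserving a little care is the passage from the pointwise identity $k_{1}=\pm k\,w^{-1}$ to a bona fide coefficient function $f(u)$ depending on $u$ alone, for which one uses the continuity of the principal curvatures (guaranteed by $K<0$) together with the connectedness of $G$, so that the Proposition indeed applies.
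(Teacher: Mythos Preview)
Your proposal is correct and follows essentially the same route as the paper: eliminate $k_{2}$ via $k_{1}k_{2}=K=-\delta^{2}w^{-4}$ to obtain $k_{1}^{4}=k^{4}w^{-4}$, hence $k_{1}=\pm\,k\,w^{-1}$, and then invoke Proposition~1 with $n=-1$. Your continuity/connectedness argument for the constancy of the sign is a bit more careful than the paper, which passes directly from $k_{1}=\pm\,k\,w^{-1}$ to Proposition~1 without spelling this out.
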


\begin{proof}
By using (\ref{4}) and (\ref{9}) we obtain $k%
\genfrac{}{}{0pt}{1}{4}{1}%
=k^{4}\,w^{-4}$, so that it is $k_{1}=\pm \,k\,w^{-1}$. From Proposition 1 it
follows $k_{1}=-k\,w^{-1}$. Thus the normal curvature $k_{1}$ has the required
form (\ref{6}), where $n=-1$. Hence $\Phi$ is an Edlinger-surface.
\end{proof}

\section{The case of the normal curvature}

Continuing this line of work we consider further geometrically distinguished
families of curves on the skew ruled surface $\Phi$ and suppose that the
normal curvature along the curves of these families are of the form
\begin{equation}
k_{N}=f(u)\,w^{n},\;n\in%
\mathbb{Z}
,\;f(u)\in C^{0}(I). \label{10}%
\end{equation}
Our aim is the specification of these ruled surfaces.\smallskip \medskip

\noindent \textbf{3.1. }Let $S_{1}$ \emph{be the family of curves of constant
striction distance}. From (\ref{7}) the normal curvature along a curve of
$S_{1}$ is obtained%
\[
k_{N}=\frac{1}{w}\cdot \frac{-k\,v^{2}-\delta \,%
\acute{}%
\,v+\delta^{2}\left(  k-\lambda \right)  }{v^{2}+\delta^{2}\left(  \lambda
^{2}+1\right)  }.
\]
Therefore $k_{N}$ has the form (\ref{10}) if and only if%
\begin{equation}
f\,w^{n+1\,}[v^{2}+\delta^{2}(\lambda^{2}+1)]+k\,v^{2}+\delta \,%
\acute{}%
\,v+\delta^{2}(k-\lambda)=0\text{.}\label{11}%
\end{equation}
It is observed that the function $f$ vanishes exactly when for all $v\in%
\mathbb{R}
$ holds%
\begin{equation}
k\,v^{2}+\delta \,%
\acute{}%
\,v+\delta^{2}(k-\lambda)=0,\label{12}%
\end{equation}
so that $k=\delta \,%
\acute{}%
\,=\lambda=0$, which means that $\Phi$ is a right helicoid.\smallskip \newline
Let now be $f\neq0$. For $n=-1$, using (\ref{11}) the following is derived
\[
Q(v):=f[v^{2}+\delta^{2}(\lambda^{2}+1)]+k\,v^{2}+\delta \,%
\acute{}%
\,v+\delta^{2}(k-\lambda)=0.
\]
The vanishing of the coefficients of the polynomial $Q(v)$ implies%
\[
f=-k,\quad \delta \,%
\acute{}%
\,=0,\quad \lambda \,(k\lambda+1)=0.
\]
Therefore the surface $\Phi$ is either an orthoid ruled surface\footnote{A
ruled surface is called \emph{orthoid} if its rulings are perpendicular to the
striction line.} of constant parameter of distribution ($\delta \,%
\acute{}%
=\lambda=0$) or an Edlinger-surface ($\delta \,%
\acute{}%
=k\lambda+1=0$).\smallskip \newline For $n>-1$ it follows from (\ref{11})%
\[
Q(v):=f^{2}(v^{2}+\delta^{2})^{n+1}[v^{2}+\delta^{2}(\lambda^{2}%
+1)]^{2}-[k\,v^{2}+\delta \,%
\acute{}%
\,v+\delta^{2}(k-\lambda)]^{2}=0.
\]
From the vanishing of the coefficient of $v^{2\left(  n+3\right)  }$\ it
follows that $f=0$ which it was excluded.\smallskip \newline For $n<-1$ it
follows from (\ref{11})%
\[
Q(v):=(v^{2}+\delta^{2})^{-n-1}[k\,v^{2}+\delta \,%
\acute{}%
\,v+\delta^{2}(k-\lambda)]^{2}-f^{2}[v^{2}+\delta^{2}(\lambda^{2}+1)]^{2}=0.
\]
The vanishing of the coefficient of $v^{2\left(  n-1\right)  }$ implies $k=0$.
Then the polynomial $Q(v)$ becomes%
\begin{equation}
Q(v)=(v^{2}+\delta^{2})^{-n-1}(\delta \,%
\acute{}%
\,v-\delta^{2}\, \lambda)^{2}-f^{2}[v^{2}+\delta^{2}(\lambda^{2}+1)]^{2}%
=0.\label{13}%
\end{equation}
For $n=-2$ the polynomial $Q(v)$ takes the form
\[
Q(v)=(v^{2}+\delta^{2})(\delta \,%
\acute{}%
\,v-\delta^{2}\lambda)^{2}-f^{2}[v^{2}+\delta^{2}(\lambda^{2}+1)]^{2}=0.
\]
From the vanishing of the coefficients of the polynomial $Q(v)$ the result is
$f=0$, which is a contradiction.\smallskip \newline For $n<-2$ the vanishing of
the coefficient of $v^{-2n}\ $in (\ref{13}) implies $\delta \,%
\acute{}%
=0$, therefore%
\begin{equation}
Q(v)=\delta^{4}\lambda^{2}(v^{2}+\delta^{2})^{-n-1}-f^{2}[v^{2}+\delta
^{2}(\lambda^{2}+1)]^{2}=0.\label{14}%
\end{equation}
In particular for $n=-3$ one has%
\[
Q(v)=\delta^{4}\lambda^{2}(v^{2}+\delta^{2})^{2}-f^{2}[v^{2}+\delta
^{2}(\lambda^{2}+1)]^{2}=0.
\]
From the vanishing of the coefficients of the polynomial $Q(v)$ it follows
again that $f=0$ which is a contradiction.\smallskip

\noindent For $n<-3$ (\ref{14}) results in $\lambda=f=0$ which is equally
impossible.\smallskip \newline Thus the following has been shown:

\begin{proposition}
\textit{Suppose that the normal curvature along the curves of constant
striction distance of a skew ruled }$C^{3}$\textit{-surface }$\Phi \subset%
\mathbb{R}
^{3}$\textit{ has the form \emph{(\ref{10})}. Then one of the following
occurs:}\emph{\newline(a)\quad}$f=0$\textit{ and }$\Phi$\textit{ is a right
helicoid.}\emph{\newline(b)\quad}$n=-1,f(u)=-k(u)$\textit{ and }$\Phi$\textit{
is either an orthoid ruled surface of constant parameter of distribution or an
Edlinger-surface.}\emph{\medskip}
\end{proposition}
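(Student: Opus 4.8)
The plan is to turn the stated property into a single polynomial identity in $v$ that must hold for every fixed $u\in I$, and then to extract the geometry from the vanishing of its coefficients. First I would specialize the general normal-curvature formula (\ref{7}) to the direction $dv=0$, which is exactly the tangent direction of a curve $v=\text{const}$ of the family $S_{1}$; this produces the rational expression for $k_{N}$ at the head of 3.1. Imposing the ansatz (\ref{10}) and clearing the single factor $w$ in the denominator yields the master relation (\ref{11}), namely $f\,w^{\,n+1}[v^{2}+\delta^{2}(\lambda^{2}+1)]+k\,v^{2}+\delta'\,v+\delta^{2}(k-\lambda)=0$, which is required to hold identically in $v$. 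Everything then hinges on reading this identity as a statement about coefficients.

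The dichotomy $f\equiv 0$ versus $f\not\equiv 0$ organizes the argument. If $f$ vanishes, (\ref{11}) collapses to the quadratic $k\,v^{2}+\delta'\,v+\delta^{2}(k-\lambda)=0$; since this must vanish for all $v$, each coefficient is zero, giving $k=\delta'=0$ and hence, because $\delta\neq 0$, also $\lambda=0$. These are precisely the invariants of the right helicoid, which is alternative (a). For $f\not\equiv 0$ the essential difficulty appears: $w^{\,n+1}=(v^{2}+\delta^{2})^{(n+1)/2}$ is a genuine polynomial in $v$ only when $n$ is odd. The device I would use is to isolate the term carrying $w^{\,n+1}$, square the equation, and multiply through by whatever power of $w^{2}=v^{2}+\delta^{2}$ is needed to produce an honest polynomial $Q(v)\equiv 0$, all of whose coefficients must then vanish.

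The case $n=-1$ is immediate, since $w^{\,n+1}=1$ already makes (\ref{11}) a polynomial: comparing the coefficients of $v^{2},v^{1},v^{0}$ forces $f=-k$, then $\delta'=0$, and finally $\lambda(k\lambda+1)=0$, i.e. either $\lambda=0$ (orthoid of constant parameter of distribution) or $k\lambda+1=0$ (Edlinger-surface). This is alternative (b). It remains to exclude every other integer $n$, and here lies the main obstacle: for each range of $n$ one must correctly identify which of the two competing blocks of $Q$ carries the higher degree and then read off the appropriate leading coefficient. The hard part is precisely this degree bookkeeping, since a wrong count would extract the wrong relation; the rest is routine comparison.

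Carrying it out, for $n>-1$ the squared term $f^{2}(v^{2}+\delta^{2})^{n+1}[\,\cdots\,]^{2}$ has degree $2(n+3)$, strictly exceeding the degree-$4$ competitor, so its leading coefficient $f^{2}$ must vanish, contradicting $f\neq 0$. For $n<-1$ the dominant power instead forces $k=0$, which reduces $Q$ to (\ref{13}); then the subcases $n=-2$ and $n=-3$ make the remaining coefficients incompatible unless $f=0$, while $n<-2$ first yields $\delta'=0$, passing to (\ref{14}), and $n<-3$ finally forces $\lambda=0$ and with it $f=0$. Thus every value of $n$ other than $-1$ contradicts $f\neq 0$, so only the alternatives (a) and (b) survive, and the proposition follows.
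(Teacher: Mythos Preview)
Your proposal is correct and follows essentially the same route as the paper: you derive the master identity (\ref{11}) by setting $dv=0$ in (\ref{7}), dispose of the case $f=0$ via the quadratic (\ref{12}), handle $n=-1$ directly as a polynomial identity, and for all other $n$ square and clear powers of $w^{2}$ to obtain the polynomials (\ref{13}) and (\ref{14}), whose coefficient comparison forces $f=0$. The only cosmetic difference is the order in which you narrate the subcases $n=-2$, $n=-3$, $n<-3$, but the logical content and the degree bookkeeping match the paper's argument exactly.
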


\noindent \textbf{3.2.} Let $S_{2}$ be \emph{the family of the orthogonal
trajectories of the family }$S_{1}$. This family is determined by
\[
\lbrack v^{2}+\delta^{2}(\lambda^{2}+1)]\,du+\delta \, \lambda \,dv=0.
\]
From (\ref{7}) the corresponding normal curvature is obtained%
\[
k_{N}=\frac{1}{w^{3}}\cdot \frac{-\delta^{2}\lambda \, \left[  \left(  k\,
\lambda+2\right)  v^{2}+\delta \,%
\acute{}%
\, \lambda \,v+\delta^{2}\left(  \lambda^{2}+k\, \lambda+2\right)  \right]
}{v^{2}+\delta^{2}\left(  \lambda^{2}+1\right)  }.
\]
Consequently $k_{N}$ has the form (\ref{10}) if and only if%
\begin{equation}
f\,w^{n+3}[v^{2}+\delta^{2}(\lambda^{2}+1)]+\delta^{2}\lambda \lbrack(k\,
\lambda+2)v^{2}+\delta \,%
\acute{}%
\, \lambda \,v+\delta^{2}(\lambda^{2}+k\, \lambda+2)]=0. \label{15}%
\end{equation}
Obviously the function $f$ vanishes exactly when for all $v\in%
\mathbb{R}
$ holds
\[
\lambda \,[(k\, \lambda+2)v^{2}+\delta \,%
\acute{}%
\, \lambda \,v+\delta^{2}(\lambda^{2}+k\, \lambda+2)]=0.
\]
In this case the function $\lambda$ vanishes too, because otherwise it would
have been%
\[
k\, \lambda+2=\delta \,%
\acute{}%
\, \lambda=\delta^{2}(\lambda^{2}+k\, \lambda+2)=0,
\]
which are impossible. Therefore it is $f=0$ if and only if $\lambda=0$ and
this means that $\Phi$ is\ an orthoid ruled surface.\smallskip \newline Let now
be $f\, \lambda \neq0.$ For $n=-3$ it follows from (\ref{15})%
\[
Q(v):=f\,[v^{2}+\delta^{2}(\lambda^{2}+1)]+\delta^{2}\lambda \,[(k\,
\lambda+2)v^{2}+\delta \,%
\acute{}%
\, \lambda \,v+\delta^{2}(\lambda^{2}+k\, \lambda+2)]=0.
\]
From the vanishing of the coefficients of the polynomial $Q(v)$ it is obtained
that%
\[
f=-\delta^{2}\lambda \,(k\, \lambda+2),\quad \delta \,%
\acute{}%
=0,\quad k\, \lambda+1=0.
\]
This results in $f=-\delta^{2}\, \lambda$ and $\Phi$ is an
Edlinger-surface.\smallskip \newline One can easily confirm that the cases
$n>-3$ and $n<-3$ lead to contradictions. These results imply the following

\begin{proposition}
\textit{Suppose that the normal curvature along the orthogonal trajectories of
the curves of constant striction distance of a skew ruled }$C^{3}%
$\textit{-surface }$\Phi \subset%
\mathbb{R}
^{3}$\textit{ has the form \emph{(\ref{10})}. Then one of the following
occurs:}\emph{\newline(a)}$\quad f=0$\textit{ and }$\Phi$\textit{ is an
orthoid ruled surface.}\emph{\newline(b)}$\quad n=-3$\textit{, }%
$f(u)=\delta^{2}(u)\,k^{-1}(u)$\textit{ and }$\Phi$\textit{ is an
Edlinger-surface.}\emph{\medskip}
\end{proposition}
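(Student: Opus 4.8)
The plan is to follow the same scheme as in the proofs of the preceding propositions: convert the requirement that $k_{N}$ along $S_{2}$ have the form (\ref{10}) into a polynomial identity in $v$ valid for all $v\in\mathbb{R}$, and then read off the geometry from the vanishing of its coefficients. First I would substitute the direction of $S_{2}$, namely $(du:dv)=(\delta\lambda:-[v^{2}+\delta^{2}(\lambda^{2}+1)])$ obtained from its defining equation, into the general formula (\ref{7}). A short computation shows that the first fundamental form in this direction equals $[v^{2}+\delta^{2}(\lambda^{2}+1)]\,w^{2}$ and that the numerator contributes the factor $-\delta^{2}\lambda[(k\lambda+2)v^{2}+\delta'\lambda v+\delta^{2}(\lambda^{2}+k\lambda+2)]$, which reproduces the stated expression for $k_{N}$. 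Imposing (\ref{10}) and clearing the denominator $v^{2}+\delta^{2}(\lambda^{2}+1)$ (which never vanishes, since $\delta\neq0$) yields equation (\ref{15}).

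Next I would dispose of the two immediate cases. If $f\equiv0$, then the bracketed term in (\ref{15}) must vanish identically in $v$; were $\lambda\neq0$ this would force both $k\lambda+2=0$ and $\delta^{2}(\lambda^{2}+k\lambda+2)=0$, hence $\lambda^{2}=0$, a contradiction. Thus $\lambda=0$, so $\Phi$ is orthoid, which is case (a). If $f\lambda\neq0$ I would split on the exponent $n$. For $n=-3$ the factor $w^{n+3}=1$ turns (\ref{15}) into a genuine polynomial $Q(v)$ of degree two, and comparing the coefficients of $v^{2}$, $v^{1}$, $v^{0}$ gives $f=-\delta^{2}\lambda(k\lambda+2)$, $\delta'=0$ and $k\lambda+1=0$. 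Substituting $k\lambda+2=1$ yields $f=-\delta^{2}\lambda=\delta^{2}k^{-1}$ (using $k\lambda=-1$), while $\delta'=k\lambda+1=0$ identify $\Phi$ as an Edlinger-surface, giving case (b).

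The remaining exponents $n\neq-3$ are where the work lies, because then $w^{n+3}$ is not a rational function of $v$. Here I would isolate the term $f\,w^{n+3}[v^{2}+\delta^{2}(\lambda^{2}+1)]$ in (\ref{15}) and square, obtaining the identity
\[
f^{2}(v^{2}+\delta^{2})^{n+3}[v^{2}+\delta^{2}(\lambda^{2}+1)]^{2}=\delta^{4}\lambda^{2}[(k\lambda+2)v^{2}+\delta'\lambda v+\delta^{2}(\lambda^{2}+k\lambda+2)]^{2},
\]
valid for all $v$. For $n>-3$ the left-hand side has degree $2n+10>4$, so the vanishing of its leading coefficient forces $f=0$, contrary to $f\lambda\neq0$. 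For $n<-3$ I would first multiply through by $(v^{2}+\delta^{2})^{-(n+3)}$ to clear the negative power; the right-hand side then has degree $-2n-2>4$, and its leading coefficient gives $\delta^{4}\lambda^{2}(k\lambda+2)^{2}=0$, whence $k\lambda+2=0$.

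The main obstacle is precisely this $n<-3$ branch: after $k\lambda+2=0$ the relation $\lambda^{2}+k\lambda+2=\lambda^{2}$ collapses the surviving factor, and I would have to inspect the next coefficients, which successively force $\delta'=0$ and then $\lambda=0$, each step producing a contradiction with $f\lambda\neq0$. This degree-bookkeeping for $n<-3$ is the only delicate point; every other case reduces to a direct coefficient comparison. Collecting the surviving alternatives (a) and (b) completes the proof.
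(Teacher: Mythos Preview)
Your proposal is correct and follows essentially the same scheme as the paper: derive (\ref{15}), treat $f=0$ and $n=-3$ by direct coefficient comparison, and then rule out all other $n$. The paper in fact says only that ``the cases $n>-3$ and $n<-3$ lead to contradictions'' and leaves the verification to the reader, whereas you spell out the squaring-and-degree-counting argument that does this. One small inaccuracy: your claim that $w^{\,n+3}$ is ``not a rational function of $v$'' for $n\neq-3$ is false when $n+3$ is even, but this is harmless since your squared identity is valid in every case and the degree comparison goes through unchanged.
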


\noindent \textbf{3.3.} Let $S_{3}$ be the family \emph{of the orthogonal
trajectories of the rulings}, i.e. the family which is determined by%
\[
\delta \, \lambda \,du+dv=0.
\]
From (\ref{7}) the corresponding normal curvature is obtained%
\[
k_{N}=\frac{-k\,v^{2}-\delta \,%
\acute{}%
\,v-\delta^{2}\left(  k+\lambda \right)  }{w^{3}}.
\]
Therefore $k_{N}$ has the form (\ref{10}) if and only if
\begin{equation}
f\,w^{n+3}+k\,v^{2}+\delta \,%
\acute{}%
\,v+\delta^{2}(k+\lambda)=0. \label{16}%
\end{equation}
The function $f$ vanishes if and only if (\ref{12}) holds for all $v\in%
\mathbb{R}
$ or, equivalently, if $k=\lambda=\delta \,%
\acute{}%
=0$. Hence the surface $\Phi$ is a right helicoid.\smallskip \newline Let now
be $f\neq0$. For $n=-3$ it follows from (\ref{16})%
\[
Q(v):=k\,v^{2}+\delta \,%
\acute{}%
\,v+\delta^{2}(k+\lambda)+f=0.
\]
The vanishing of the coefficients of the polynomial $Q(v)$ implies%

\[
f=-\delta^{2}\, \lambda,\quad k=0,\quad \delta \,%
\acute{}%
=0.
\]
Therefore $\Phi$ is a conoidal ruled surface of constant parameter of
distribution.\smallskip \newline For $n=-2$ it follows from (\ref{16})%
\[
Q(v):=f^{2}(v^{2}+\delta^{2})-[k\,v^{2}+\delta \,%
\acute{}%
\,v+\delta^{2}(k+\lambda)]^{2}=0.
\]
From the vanishing of the coefficients of the polynomial $Q(v)$ it follows%
\[
k=0,\quad f^{2}=\delta \,%
\acute{}%
\,^{2},\quad \delta \,%
\acute{}%
\, \lambda=0,\quad f^{2}=\delta^{2}\lambda^{2}=0,
\]
therefore $f=0$ which it was excluded.\smallskip \newline From (\ref{16}) and
for $n=-1$ the outcome is
\[
Q(v):=f(v^{2}+\delta^{2})+k\,v^{2}+\delta \,%
\acute{}%
\,v+\delta^{2}(k+\lambda)=0.
\]
From the vanishing of the coefficients of the polynomial $Q(v)$ it follows
\[
f=-k,\quad \delta \,%
\acute{}%
=0,\quad \lambda=0.
\]
Therefore $\Phi$ is an orthoid ruled surface of constant parameter of
distribution.\smallskip \newline One can easily confirm that the cases $n\geq0$
and $n\leq-4$ lead to contradictions. So it can be stated that:

\begin{proposition}
\textit{Suppose that the normal curvature along the orthogonal trajectories of
the rulings of a skew ruled }$C^{3}$\textit{-surface }$\Phi \subset%
\mathbb{R}
^{3}$\textit{ has the form \emph{(\ref{10})}. Then one of the following
occurs:}\emph{\newline(a)}$\quad f=0$\textit{ and }$\Phi$\textit{ is a right
helicoid.}\emph{\newline(b)}$\quad n=-3$\textit{, }$f(u)=-\delta
^{2}(u)\, \lambda(u)$\textit{ and }$\Phi$\textit{ is a conoidal ruled surface
of constant parameter of distribution.}\emph{\newline(c)}$\quad n=-1$\textit{,
}$f(u)=-k(u)$\textit{ and }$\Phi$\textit{ is an orthoid ruled surface of
constant parameter of distribution.}\emph{\medskip}
\end{proposition}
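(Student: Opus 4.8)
The plan is to specialise the general normal-curvature formula (\ref{7}) to the family $S_{3}$ and then force the result into the shape (\ref{10}). Substituting the defining relation $dv=-\delta\lambda\,du$ of $S_{3}$ into (\ref{7}) collapses the quadratic in $du:dv$ and gives
\[
k_{N}=\frac{-k\,v^{2}-\delta' v-\delta^{2}(k+\lambda)}{w^{3}},
\]
so the requirement $k_{N}=f\,w^{n}$ is equivalent to the identity (\ref{16}),
\[
f\,w^{n+3}+k\,v^{2}+\delta' v+\delta^{2}(k+\lambda)=0\qquad\forall\,v\in\mathbb{R}.
\]
I would first dispose of the branch $f\equiv0$: then the bracket $k v^{2}+\delta' v+\delta^{2}(k+\lambda)$ must vanish identically in $v$, forcing $k=\delta'=0$ and then $\delta^{2}\lambda=0$; since $\delta\neq0$ on a skew surface this gives $k=\lambda=\delta'=0$, i.e.\ a right helicoid, which is alternative (a).

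Assume now $f\neq0$. The only genuine obstacle is that $w=\sqrt{v^{2}+\delta^{2}}$ is irrational in $v$, so (\ref{16}) is not literally a polynomial identity. The organising device is to split on the parity of $n+3$. When $n+3$ is even, $w^{n+3}=(v^{2}+\delta^{2})^{(n+3)/2}$ is already a polynomial and I compare coefficients in (\ref{16}) directly; when $n+3$ is odd I isolate the single $w$-term and square, obtaining the honest polynomial
\[
Q(v)=f^{2}\,w^{2(n+3)}-\bigl[k v^{2}+\delta' v+\delta^{2}(k+\lambda)\bigr]^{2}.
\]
In either case the geometric conclusion is read off from the vanishing of the coefficients of the relevant polynomial in $v$.

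Carrying this out for the three central exponents is routine. For $n=-3$ one has $w^{n+3}=1$, so (\ref{16}) itself is $k v^{2}+\delta' v+\delta^{2}(k+\lambda)+f=0$; its coefficients give $k=0$, $\delta'=0$, $f=-\delta^{2}\lambda$, a conoidal ruled surface of constant parameter of distribution, alternative (b). For $n=-1$ one has $w^{n+3}=v^{2}+\delta^{2}$, and the coefficients of $f(v^{2}+\delta^{2})+k v^{2}+\delta' v+\delta^{2}(k+\lambda)=0$ force $f=-k$, $\delta'=0$, $\lambda=0$, an orthoid ruled surface of constant parameter of distribution, alternative (c). For the intermediate $n=-2$ the squared polynomial first yields $k=0$ from its $v^{4}$-coefficient, after which the remaining coefficients collapse to $f^{2}=(\delta')^{2}$, $\delta'\lambda=0$ and $f^{2}=\delta^{2}\lambda^{2}$; these together force $f=0$, contradicting $f\neq0$, so $n=-2$ is excluded.

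The part I expect to require the most care is the tail $n\geq0$ and $n\leq-4$, where one only needs to confirm a contradiction. Here I would argue by top degree. For $n\geq0$, after clearing $w$ the highest power of $v$ comes from $f^{2}(v^{2})^{n+3}$ (or, when $n+3$ is even, from $f(v^{2})^{(n+3)/2}$ in (\ref{16}) itself), whose coefficient is a nonzero multiple of $f^{2}$ or of $f$, and its vanishing forces $f=0$. For $n\leq-4$, rewriting (\ref{16}) as $f=-w^{-(n+3)}\bigl[k v^{2}+\delta' v+\delta^{2}(k+\lambda)\bigr]$ and clearing $w$, the leading coefficient is a multiple of $k^{2}$ or of $k$, forcing $k=0$; descending through the next coefficients exactly as in the $n=-2$ computation then gives $\delta'=0$, $\lambda=0$ and finally $f=0$. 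Each such reduction contradicts $f\neq0$, so no exponent outside $\{-3,-1\}$ survives in the nondegenerate branch. Assembling the degenerate branch $f=0$ with the two admissible exponents then yields exactly the alternatives (a), (b) and (c), completing the proof.
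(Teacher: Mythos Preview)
Your argument is correct and follows essentially the same route as the paper: specialise (\ref{7}) to $S_{3}$, obtain (\ref{16}), handle $f=0$ first, then treat $n=-3$, $-2$, $-1$ by comparing polynomial coefficients, and rule out the remaining exponents by degree. The only difference is cosmetic: you make the parity-of-$(n+3)$ splitting explicit and actually spell out the tail cases $n\geq0$ and $n\leq-4$, whereas the paper simply asserts that these ``lead to contradictions.''
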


\noindent \textbf{3.4.} Let $S_{4}$ be \emph{the family of curves of constant
Gaussian curvature}\textit{ }\cite{Sachs}. This family is determined by%
\[
\delta \,%
\acute{}%
\,(\delta^{2}-v^{2})\,du+2\delta \,v\,dv=0.
\]
Putting for abbreviation
\[
A=\left(  4\delta^{2}+\delta \,%
\acute{}%
\,^{2}\right)  v^{4}+4\delta^{2\,}\delta \,%
\acute{}%
\lambda \,v^{3}+2\delta^{2}\left[  2\delta^{2}\left(  \lambda^{2}+1\right)
-\delta \,%
\acute{}%
\,^{2}\right]  v^{2}-4\delta^{4\,}\delta \,%
\acute{}%
\, \lambda \,v+\delta^{4\,}\delta \,%
\acute{}%
\,^{2},
\]
the corresponding normal curvature can be computed from (\ref{7}) to be%
\[
k_{N}=\frac{-1}{w}\cdot \frac{4\delta^{2}\,v\left[  k\,v^{3}+\delta^{2}\left(
k-\lambda \right)  v+\delta^{2}\, \delta \,%
\acute{}%
\, \right]  }{A}.
\]
$k_{N}$ has the form (\ref{10}) if and only if%
\begin{align}
&  \text{ }f\,w^{n+1}[(4\delta^{2}+\delta \,%
\acute{}%
\,^{2})v^{4}+4\delta^{2}\, \delta \,%
\acute{}%
\, \lambda \,v^{3}+2\delta^{2}[2\delta^{2}(\lambda^{2}+1)-\delta \,%
\acute{}%
\,^{2}]v^{2}\nonumber \label{18}\\
&  -4\delta^{4\,}\delta \,%
\acute{}%
\, \lambda \,v+\delta^{4\,}\delta \,%
\acute{}%
\,^{2}]+4\delta^{2}v\,[k\,v^{3}+\delta^{2}(k-\lambda)v+\delta^{2}\delta \,%
\acute{}%
\,]=0.
\end{align}
The function $f$ vanishes exactly when%
\[
k\,v^{3}+\delta^{2}(k-\lambda)v+\delta^{2\,}\delta \,%
\acute{}%
\,=0
\]
for all $v\in%
\mathbb{R}
$ or, equivalently, if $k=\lambda=\delta \,%
\acute{}%
\,=0$. Consequently $\Phi$ is a right helicoid.\smallskip \newline Let now be
$f\neq0$. For $n=-1$ it follows from (\ref{18})%
\begin{align*}
Q(v)  &  :=f\,[(4\delta^{2}+\delta \,%
\acute{}%
\,^{2})v^{4}+4\delta^{2}\, \delta \,%
\acute{}%
\, \lambda \,v^{3}+2\delta^{2\,}[2\delta^{2}(\lambda^{2}+1)-\delta \,%
\acute{}%
\,^{2}]v^{2}\\
&  -4\delta^{4}\, \delta \,%
\acute{}%
\, \lambda \,v+\delta^{4}\delta \,%
\acute{}%
\,^{2}]+4\delta^{2}\,v[k\,v^{3}+\delta^{2}(k-\lambda)v+\delta^{2\,}\delta \,%
\acute{}%
\,]=0.
\end{align*}
The coefficients
\begin{align*}
a_{4}  &  :=f\,(4\delta^{2}+\delta \,%
\acute{}%
\,^{2})+4\delta^{2}k,\quad a_{3}:=4f\, \delta^{2}\, \delta \,%
\acute{}%
\, \lambda,\\
a_{2}  &  :=2f\, \delta^{2}[2\delta^{2}(\lambda^{2}+1)-\delta \,%
\acute{}%
\,^{2}]+4\delta^{4}(k-\lambda),\\
a_{1}  &  :=-4f\, \delta^{4}\delta \,%
\acute{}%
\, \lambda+4\delta^{4}\, \delta \,%
\acute{}%
\,,\quad a_{0}:=f\, \delta^{4\,}\delta \,%
\acute{}%
\,^{2}%
\end{align*}
of the polynomial $Q(v)$ vanish. From $a_{0}=0$ it follows $\delta \,%
\acute{}%
=0$. Then from the vanishing of the coefficients $a_{2}$ and $a_{4}$ we
obtain
\[
f=-k,\quad \lambda \,(k\, \lambda+1)=0.
\]
Consequently $\Phi$ is either an orthoid ruled surface of constant parameter
of distribution ($\delta \,%
\acute{}%
=\lambda=0$) or an Edlinger-surface ($\delta \,%
\acute{}%
=k\, \lambda+1=0$).\smallskip \newline The cases $n>-1$ and $n<-1$ lead to
contradictions. The following has been shown

\begin{proposition}
\textit{Suppose that the normal curvature along the curves of constant
Gaussian curvature of a skew ruled }$C^{3}$\textit{-surface }$\Phi \subset%
\mathbb{R}
^{3}$\textit{ has the form \emph{(\ref{10})}. Then one of the following
occurs:}\newline \emph{(a)}$\quad f=0$\textit{ and }$\Phi$\textit{ is a right
helicoid.}\emph{\newline(b)}$\quad n=-1$\textit{, }$f(u)=-k(u)$\textit{ and
}$\Phi$\textit{ is either an orthoid ruled surface of constant parameter of
distribution or an Edlinger-surface.}
\end{proposition}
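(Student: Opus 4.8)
The plan is to follow exactly the computational template already established in Sections 3.1--3.3, specializing it to the family $S_{4}$ of curves of constant Gaussian curvature. First I would substitute the defining direction $\delta'\,(\delta^{2}-v^{2})\,du+2\delta\,v\,dv=0$ into the normal curvature formula \eqref{7} to obtain the stated expression for $k_{N}$, with the abbreviation $A$ collecting the denominator. Imposing the required form \eqref{10} and clearing denominators then yields the master equation \eqref{18}. The degenerate branch $f=0$ is immediate: it forces the cubic $k\,v^{3}+\delta^{2}(k-\lambda)v+\delta^{2}\delta'=0$ to hold for all $v$, whence $k=\lambda=\delta'=0$, which is precisely the right helicoid, giving conclusion (a).

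For the main branch $f\neq0$ I would argue by cases on the integer $n$, in each case reducing \eqref{18} to a polynomial identity $Q(v)=0$ in $v$ that must hold for all $v\in\mathbb{R}$, and then setting every coefficient to zero. The case $n=-1$ is the productive one: here $Q(v)$ is a genuine quartic whose coefficients $a_{4},a_{3},a_{2},a_{1},a_{0}$ are written out explicitly. The key deduction is to read off from $a_{0}=f\,\delta^{4}\delta'^{\,2}=0$ (using $f\neq0$ and $\delta\neq0$, since $\Phi$ is skew) that $\delta'=0$; feeding this back simplifies $a_{2}$ and $a_{4}$ to give $f=-k$ together with $\lambda(k\lambda+1)=0$. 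The latter factorization splits into the two sub-cases $\lambda=0$ (orthoid, constant parameter of distribution) and $k\lambda+1=0$ (Edlinger-surface), which is exactly conclusion (b).

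The remaining task is to rule out $n\neq-1$. For $n>-1$ one multiplies through so that the $f\,w^{n+1}$ term becomes a polynomial; the highest power of $v$ (here the coefficient of $v^{2(n+3)}$, matching the degree pattern seen in Section 3.1) has the form $f^{2}\cdot(\text{nonzero constant})$, forcing $f=0$, a contradiction. For $n<-1$ one instead isolates the lowest-order behaviour: squaring and rearranging puts a factor $(v^{2}+\delta^{2})^{-n-1}$ against the quartic $A$, and the extreme coefficient again forces either $f=0$ or $k=0$ followed by a cascade collapsing all remaining invariants, leading to a contradiction just as in the analogous $n<-3$ and $n<-2$ analyses of Sections 3.2 and 3.3.

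The step I expect to be the main obstacle is the verification that these off-value cases $n\neq-1$ truly yield contradictions rather than new surfaces; the paper disposes of them with the phrase ``lead to contradictions, as one can easily confirm,'' but making this rigorous requires tracking the extreme coefficients of $Q(v)$ carefully and checking that the simultaneous vanishing conditions are genuinely inconsistent with $f\neq0$ and $\delta\neq0$. The algebra in the $n=-1$ quartic is lengthy but routine; the genuine subtlety lies in confirming that no degenerate alignment of $k$, $\lambda$, and $\delta'$ sneaks a solution past the coefficient-comparison argument for $n\leq-2$ or $n\geq0$.
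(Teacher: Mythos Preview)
Your proposal is correct and follows essentially the same route as the paper: substitute the defining direction of $S_{4}$ into \eqref{7}, obtain \eqref{18}, dispose of $f=0$ via the vanishing cubic, and for $f\neq0$ work out the $n=-1$ coefficient system exactly as you describe (from $a_{0}=0$ get $\delta'=0$, then $f=-k$ and $\lambda(k\lambda+1)=0$), while dismissing $n\neq-1$ by extreme-coefficient comparison. The only minor imprecision is the borrowed degree count $v^{2(n+3)}$ from Section~3.1, which does not match the structure of \eqref{18} exactly, but the strategy you outline for those cases is the one the paper intends.
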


The following table assemble the above results.\bigskip \newline%
\begin{tabular}
[c]{|c|c|c|c|}\hline
$%
\begin{tabular}
[c]{c}%
{\small Normal curvature of the}\\
{\small form }${\small k}_{{\small N}}$ ${\small =}$ ${\small f\,w}^{n}$
{\small along}%
\end{tabular}
$ & ${\small f}$ & $n$ & {\small Type of the ruled surface }${\small \Phi}%
$\\ \hline \hline%
\begin{tabular}
[c]{c}%
{\small one family of}\\
{\small the lines of curvature}%
\end{tabular}
&
\begin{tabular}
[c]{c}%
${\small -k}$\\
${\small \pm \delta}$\\
${\small \delta}^{2}{\small k}^{-1}$%
\end{tabular}
&
\begin{tabular}
[c]{c}%
${\small -1}$\\
${\small -2}$\\
${\small -3}$%
\end{tabular}
&
\begin{tabular}
[c]{c}%
$\cdot$\ {\small Edlinger-surface}\\
$\cdot$\ {\small right helicoid}\\
$\cdot$ {\small Edlinger-surface}%
\end{tabular}
\\ \hline%
\begin{tabular}
[c]{c}%
{\small the curves of const.}\\
{\small striction distance}%
\end{tabular}
&
\begin{tabular}
[c]{c}%
${\small 0}$\\
${\small -k}$%
\end{tabular}
&
\begin{tabular}
[c]{c}%
{\small -}\\
${\small -1}$%
\end{tabular}
& $%
\begin{array}
[c]{c}%
{\small \cdot}\text{ {\small right helicoid}}\\
{\small \cdot}\text{\ {\small either an orthoid surface of}}\\
\text{{\small const. parameter of distrib.}}\\
\text{{\small or an Edlinger-surface}}%
\end{array}
$\\ \hline%
\begin{tabular}
[c]{c}%
{\small the orthogonal trajectories}\\
{\small of the curves of const.}\\
{\small striction distance}%
\end{tabular}
&
\begin{tabular}
[c]{c}%
${\small 0}$\\
${\small \delta}^{2}{\small k}^{-1}$%
\end{tabular}
&
\begin{tabular}
[c]{c}%
{\small -}\\
${\small -3}$%
\end{tabular}
&
\begin{tabular}
[c]{c}%
$\cdot$\ {\small orthoid surface}\\
$\cdot$\ {\small Edlinger-surface}%
\end{tabular}
\\ \hline%
\begin{tabular}
[c]{c}%
{\small the orthogonal trajectories}\\
{\small of the rulings}%
\end{tabular}
&
\begin{tabular}
[c]{c}%
${\small 0}$\\
${\small -k}$\\
${\small -\delta}^{2}{\small \lambda}$%
\end{tabular}
&
\begin{tabular}
[c]{c}%
{\small -}\\
${\small -1}$\\
${\small -3}$%
\end{tabular}
&
\begin{tabular}
[c]{c}%
$\cdot$\ {\small right helicoid}\\
$\cdot$\ {\small orthoid surface of}\\
{\small const. parameter of distrib.}\\
$\cdot$\ {\small conoidal surface of}\\
{\small const. parameter of distrib.}%
\end{tabular}
\\ \hline%
\begin{tabular}
[c]{c}%
{\small the curves of const.}\\
{\small Gaussian curvature}%
\end{tabular}
&
\begin{tabular}
[c]{c}%
${\small 0}$\\
${\small -k}$%
\end{tabular}
&
\begin{tabular}
[c]{c}%
{\small -}\\
${\small -1}$%
\end{tabular}
& $%
\begin{array}
[c]{c}%
{\small \cdot}\text{ {\small right helicoid}}\\
{\small \cdot}\text{\ {\small either an orthoid surface of}}\\
\text{{\small const. parameter of distrib.}}\\
\text{{\small or an Edlinger-surface}}%
\end{array}
$\\ \hline
\end{tabular}
\linebreak \  \newline \noindent

\end{document}